\theoremstyle{plain}
\newtheorem{theorem}{Theorem}
\newtheorem{lemma}{Lemma}
\newtheorem{corollary}{Corollary}
\theoremstyle{definition}
\newtheorem*{remark}{Remark}
\newcommand{\NN}{\mathbb{N}}
\newcommand{\ZZ}{\mathbb{Z}}
\newcommand{\QQ}{\mathbb{Q}}
\newcommand{\KK}{\mathcal{K}}
\newcommand{\PP}{\mathcal{P}}
\newcommand{\BT}{\widetilde{B}}
\newcommand{\set}[1]{\left\{#1\right\}}
\newcommand{\intpart}[1]{\mleft[#1\mright]}
\newcommand{\fracpart}[1]{\mleft\{#1\mright\}}
\newcommand{\fracsym}[2]{\mleft\{\!\!\!\mleft\{\frac{#1}{#2}\mright\}\!\!\!\mright\}}
\DeclareMathOperator{\denom}{denom}
\DeclareMathOperator{\ord}{ord}
\newcommand{\arXiv}[1]{\href{http://arxiv.org/abs/#1}{\texttt{arXiv:\,#1~[math.NT]}}}
\begin{document}

\title[Distribution modulo one and denominators]{Distribution modulo one\\
and denominators of\\ the Bernoulli polynomials}
\author{Bernd C. Kellner}
\address{G\"ottingen, Germany}
\email{bk@bernoulli.org}
\subjclass[2010]{11B68 (Primary), 11B83 (Secondary)}
\keywords{Distribution modulo one, Bernoulli polynomials, denominator,
sum of base-$p$ digits}

\begin{abstract}
Let $\{\cdot\}$ denote the fractional part and $n \geq 1$ be a fixed integer.
In this short note, we show for any prime $p$ the one-to-one correspondence
$$
  \sum_{\nu \geq 1} \left\{\frac{n}{p^\nu}\right\} > 1
  \quad \iff \quad
  p \mid \mathrm{denom}( B_n(x) - B_n ),
$$
where $B_n(x) - B_n$ is the $n$th Bernoulli polynomial without constant term
and $\mathrm{denom}(\cdot)$ is its denominator, which is squarefree.
\end{abstract}

\maketitle


\section{Introduction}

Recently, the properties of the denominators of the Bernoulli polynomials
$B_n(x)$ and $B_n(x) - B_n$ have sparked interest of several authors, see
\cite{BLMS:2017,Kellner:2017,Kellner&Sondow:2017,Kellner&Sondow:2017b}.
Various related sequences of these denominators were studied in
\cite{Kellner&Sondow:2017,Kellner&Sondow:2017b}.

The Bernoulli polynomials are defined by
\begin{align}
  \frac{t e^{xt}}{e^t - 1} &= \sum_{n \geq 0} B_n(x) \frac{t^n}{n!}
    \quad (|t| < 2\pi), \nonumber \\
\shortintertext{where}
  B_n(x) &= \sum_{k=0}^{n} \binom{n}{k} B_k \, x^{n-k} \quad (n \geq 0)
    \label{eq:bn-poly}
\end{align}
and $B_k = B_k(0)$ is the $k$th Bernoulli number
(cf.~\cite[Chap.~3.5, pp.~112]{Prasolov:2010}).

Let $p$ denote always a prime. The denominators of the Bernoulli polynomials can
be explicitly characterized as follows, which involves the function $s_p(n)$
giving the sum of base-$p$ digits of~$n$.

\begin{theorem}[Kellner and Sondow \cite{Kellner&Sondow:2017}] \label{thm:denom}
For $n \geq 1$, we have
\[
  \denom \bigl( B_n(x) - B_n \bigr)
  = \prod_{ \substack{p \, \leq \, \frac{n+1}{\lambda_n} \\
    s_p(n) \, \geq \, p}} p
\]
with
\[
  \lambda_n :=
    \begin{cases}
      \, 2, & \text{if $n$ is odd,} \\
      \, 3, & \text{if $n$ is even.}
    \end{cases}
\]
The bound $\frac{n+1}{\lambda_n}$ is sharp for odd $n=2p-1$ and even $n=3p-1$,
when $p$ is an odd prime, respectively.
\end{theorem}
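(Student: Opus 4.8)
The plan is to reduce the statement to a valuation computation on the coefficients of $B_n(x)-B_n=\sum_{k=0}^{n-1}\binom{n}{k}B_k\,x^{n-k}$ and then translate it into base-$p$ combinatorics. Since the denominator of a rational polynomial is the least common multiple of the denominators of its coefficients, $p\mid\denom(B_n(x)-B_n)$ if and only if $\ord_p\!\bigl(\binom{n}{k}B_k\bigr)<0$ for some $1\le k\le n-1$. By the von Staudt--Clausen theorem $\ord_p(B_k)=-1$ exactly when $(p-1)\mid k$ and $B_k\neq0$, and $\ord_p(B_k)\ge0$ otherwise (recall $B_k=0$ for odd $k\ge3$, while $B_1=-\tfrac12$ is relevant only at $p=2$). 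As $\ord_p\binom{n}{k}\ge0$, the coefficient has negative valuation precisely when $p\nmid\binom{n}{k}$, $(p-1)\mid k$, and $B_k\neq0$; in that case the valuation equals $-1$, which already shows $\denom(B_n(x)-B_n)$ is squarefree. For odd $p$ the congruence $(p-1)\mid k$ forces $k$ to be even, so $B_k\neq0$ is automatic; the prime $p=2$ I would dispatch directly, using the submasks $k=1$ or $k=2^j$ of $n$.

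For the main case of odd $p$ I would now invoke Kummer's theorem: writing $n=\sum_i a_ip^i$ in base $p$, one has $p\nmid\binom{n}{k}$ if and only if the base-$p$ digits $b_i$ of $k$ obey $0\le b_i\le a_i$ for all $i$. Because $p\equiv1\pmod{p-1}$, the condition $(p-1)\mid k$ amounts to $s_p(k)=\sum_i b_i\equiv0\pmod{p-1}$. Hence $p\mid\denom(B_n(x)-B_n)$ if and only if there is a digit vector $(b_i)$ with $0\le b_i\le a_i$, distinct from both the zero vector and $(a_i)$, whose coordinate sum is a positive multiple of $p-1$. The crux is that, letting the $b_i$ run independently over $[0,a_i]$, the attainable sums are exactly the integers $0,1,\dots,s_p(n)$, with the value $0$ forcing $k=0$ and the value $s_p(n)$ forcing $k=n$. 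A permissible $k$ therefore exists if and only if the least positive multiple of $p-1$, namely $p-1$ itself, lies strictly below $s_p(n)$; that is, if and only if $s_p(n)\ge p$, giving the stated divisibility criterion.

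Finally I would check that $s_p(n)\ge p$ is consistent with the bound in the product and is sharp. Minimising $n$ under a prescribed digit sum by loading the low-order digits, the smallest $n$ with $s_p(n)\ge p$ is $n=1\cdot p+(p-1)=2p-1$, so $p\le\frac{n+1}{2}$. When $n$ is even a parity refinement enters: for odd $p$ one has $n\equiv s_p(n)\pmod 2$, so an even $n$ needs an even digit sum, and the smallest such $n$ with $s_p(n)\ge p$ is $n=2\cdot p+(p-1)=3p-1$, whence $p\le\frac{n+1}{3}$. Equality is attained at $n=2p-1$ (where $s_p(n)=p$) and, for odd $p$, at $n=3p-1$ (where $s_p(n)=p+1$), proving sharpness. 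I expect the main obstacle to be precisely this parity-sensitive extremal step together with the careful bookkeeping of the excluded configurations $k\in\{0,n\}$ in the combinatorial count; once the range of attainable digit sums is fixed, the threshold $s_p(n)\ge p$ falls out immediately.
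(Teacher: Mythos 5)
Your argument is sound, but note that the paper you are being compared against does not itself prove Theorem~\ref{thm:denom}; it quotes it from \cite{Kellner&Sondow:2017} and instead establishes the bound-free version \eqref{eq:denom-poly} implicitly, via Theorem~\ref{thm:main} combined with the identity $\fracsym{n}{p}=s_p(n)/(p-1)$ from \eqref{eq:fs-sp}, while the bound $p\le\frac{n+1}{\lambda_n}$ is handled by the first lemma. Measured against that material, your route is the same mathematics in different packaging. Your reduction to $\ord_p\bigl(\binom{n}{k}B_k\bigr)<0$ via von Staudt--Clausen is the content of Lemma~\ref{lem:bt-p-val}, and your Kummer/Lucas step --- a digit vector dominated by that of $n$, with coordinate sum a positive multiple of $p-1$ and distinct from $0$ and $n$, exists iff $s_p(n)\ge p$ --- is exactly Lemma~\ref{lem:fs-val-binom}, which the paper deliberately rephrases through the symbol $\fracsym{n}{p}$ so as to avoid invoking Lucas's theorem (see the Remark crediting Carlitz). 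Your extremal-digit argument for the bound (the least $n$ with $s_p(n)\ge p$ is $2p-1$; imposing the parity constraint $n\equiv s_p(n)\pmod 2$ for odd $p$ pushes the least even such $n$ to $3p-1$) replaces the paper's case analysis $n=ap+r$ around \eqref{eq:frac-estim}; both work, and yours has the advantage of making the sharpness examples fall out immediately.

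Two loose ends should be tied up before the proof is complete. First, the prime $p=2$ deserves more than the phrase about ``submasks $k=1$ or $k=2^j$'': for odd $n\ge 3$ the coefficient $\binom{n}{1}B_1=-n/2$ has $\ord_2=-1$ while $s_2(n)\ge 2$ holds automatically, and for even $n$ every submask of $n$ is even (the lowest binary digit of $n$ is $0$), so a proper nonzero even submask exists iff $s_2(n)\ge 2$; the paper needs the same bifurcation inside Lemma~\ref{lem:bt-p-val}. Second, your parity refinement of the bound is stated only for odd $p$, so the pair $p=2$ with $n$ even must be checked separately: there $s_2(n)\ge 2$ and $2\mid n$ force $n\ge 6>3p-1$, so $p\le\frac{n+1}{3}$ still holds. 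With those two observations added, your proof is correct and self-contained.
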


While the sharp bounds $\frac{n+1}{\lambda_n}$ in Theorem~\ref{thm:denom} give
a stronger result, the author \cite{Kellner:2017} has subsequently shown that
they can be omitted yielding a more suitable formula:
\begin{equation} \label{eq:denom-poly}
  \denom \bigl( B_n(x) - B_n \bigr) = \prod_{s_p(n) \, \geq \, p} p
  \quad (n \geq 1).
\end{equation}

In this short note, we want to focus on the following theorem relating
\eqref{eq:denom-poly} to sums of fractional parts;
the latter denoted by $\fracpart{\,\cdot\,}$.

\begin{theorem} \label{thm:main}
If $n \geq 1$, then for any prime $p$ we have
\begin{equation} \label{eq:frac-denom}
  \sum_{\nu \geq 1} \fracpart{\frac{n}{p^\nu}} > 1
  \quad \iff \quad
  p \mid \denom \bigl( B_n(x) - B_n \bigr).
\end{equation}
This is a one-to-one correspondence, since the denominator of the Bernoulli
polynomial $B_n(x) - B_n$ is squarefree.
\end{theorem}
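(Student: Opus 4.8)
The plan is to reduce the claim to the digit-sum characterization \eqref{eq:denom-poly} and then evaluate the left-hand sum in closed form. By \eqref{eq:denom-poly} the divisibility condition on the right is equivalent to $s_p(n) \geq p$, so the entire statement collapses to proving
\[
  \sum_{\nu \geq 1} \fracpart{\frac{n}{p^\nu}} > 1
  \quad \iff \quad
  s_p(n) \geq p.
\]
Thus the real work is an exact evaluation of the sum of fractional parts.

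First I would establish the identity $\sum_{\nu \geq 1} \fracpart{\frac{n}{p^\nu}} = \frac{s_p(n)}{p-1}$. Writing $\fracpart{x} = x - \intpart{x}$, I split the sum into two convergent pieces: the geometric series $\sum_{\nu \geq 1} \frac{n}{p^\nu} = \frac{n}{p-1}$, and the floor sum $\sum_{\nu \geq 1} \intpart{\frac{n}{p^\nu}}$, which is actually finite since its terms vanish once $p^\nu > n$. The latter is precisely Legendre's formula for the $p$-adic valuation of $n!$, namely $\sum_{\nu \geq 1} \intpart{\frac{n}{p^\nu}} = \frac{n - s_p(n)}{p-1}$. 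Subtracting the two gives the asserted closed form. With the identity in hand the equivalence is immediate: $\frac{s_p(n)}{p-1} > 1$ holds iff $s_p(n) > p-1$, and since $s_p(n)$ is a nonnegative integer this is exactly $s_p(n) \geq p$. Combining this with the reduction via \eqref{eq:denom-poly} finishes the proof, and the closing remark about the one-to-one correspondence follows because the product in \eqref{eq:denom-poly} runs over distinct primes to the first power, so the denominator is squarefree.

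The argument is essentially routine once the Legendre identity is recognized, so I do not anticipate a serious obstacle. The only point meriting a little care is the legitimacy of rewriting $\sum_{\nu \geq 1} \fracpart{\frac{n}{p^\nu}}$ as a difference of two sums: one should first observe that this sum itself converges — its tail equals $\sum_{p^\nu > n} \frac{n}{p^\nu}$ — after which subtracting the finite floor sum from the convergent geometric sum term by term is fully justified. The conceptual crux, rather than any calculation, is simply spotting that Legendre's formula is the tool that turns the fractional-part sum into the digit sum $s_p(n)$.
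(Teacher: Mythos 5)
Your argument is correct, but it is not the proof the paper gives; it is essentially the short reduction that the paper deliberately avoids. You take the digit-sum characterization \eqref{eq:denom-poly} of $\denom(B_n(x)-B_n)$ as a black box, and then the whole theorem collapses to the identity $\sum_{\nu \geq 1} \fracpart{n/p^\nu} = s_p(n)/(p-1)$ (which is \eqref{eq:fs-sp} in the paper, obtained exactly as you do, from Legendre's formula) together with the integrality observation $s_p(n) > p-1 \iff s_p(n) \geq p$. That is all sound, including your care about splitting the convergent sum, and the squarefreeness remark does follow from \eqref{eq:denom-poly}. The paper, however, states explicitly that its aim is a direct and elementary proof \emph{not} resting on the earlier denominator theorem: it works from the von Staudt--Clausen theorem \eqref{eq:denom-bn}, analyzes $\ord_p \binom{n}{k}$ for $k$ with $p-1 \mid k$ via a fractional-part version of Kummer's theorem (Lemma~\ref{lem:fs-val-binom}), and computes $\ord_p \bigl(B_n(x)-B_n\bigr) \in \{-1,0\}$ directly (Lemma~\ref{lem:bt-p-val} and \eqref{eq:bt-val-set}), which simultaneously yields the equivalence and the squarefreeness. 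The trade-off: your route is much shorter but imports the full strength of the nontrivial result \eqref{eq:denom-poly} from the prior literature, so as a contribution it proves nothing new beyond juxtaposing two known facts; the paper's route is longer but self-contained, and in effect re-derives the denominator formula as a byproduct. If your intent is merely to verify the theorem assuming the cited literature, your proof is complete; if the intent is an independent proof, you would need to supply an argument for why $s_p(n) \geq p$ governs the denominator, which is where the real work of the paper lies.
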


At first glance, this remarkable connection between the distribution modulo one
and the denominators of the Bernoulli polynomials seems to be ``mysterious''.
It remains an open question, whether there exists a more general law that is
hidden behind this relation.

Clearly, the sequence $\left(\fracpart{n p^{-\nu}}\right)_{\nu \geq 1}$
eventually converges to zero for any $n \geq 1$ and prime $p$. We have
a partition into a geometric series and a finite sum of fractional parts,
which turns into
\begin{equation} \label{eq:frac-finite}
  \sum_{\nu \geq 1} \fracpart{\frac{n}{p^\nu}}
  = \frac{n}{p^\ell (p-1)} + \sum_{\nu = 1}^{\ell} \fracpart{\frac{n}{p^\nu}},
\end{equation}
where $\ell \geq 0$ satisfying $p^\ell \leq n < p^{\ell+1}$.
For $p > n$ the right-hand side of \eqref{eq:frac-finite} reduces to
$n/(p-1) \leq 1$, implying that \eqref{eq:frac-denom} only holds for finitely
many primes $p$. Similarly, we can derive from \eqref{eq:frac-finite} the same
bounds as in Theorem~\ref{thm:denom}.

\begin{lemma}
If $n \geq 1$ and $p$ is a prime, then
\[
  p > \frac{n+1}{\lambda_n}
  \quad \implies \quad
  \sum_{\nu \geq 1} \fracpart{\frac{n}{p^\nu}} \leq 1,
\]
where $\lambda_n$ is defined as in Theorem~\ref{thm:denom}.
\end{lemma}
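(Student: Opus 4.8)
The plan is to first collapse the series into a closed form in the base-$p$ digit sum $s_p(n)$, and then reduce the lemma to an elementary digit bound. Writing $n=\sum_{i=0}^{\ell}a_ip^i$ in base $p$ with $a_\ell\neq0$, one has $\fracpart{n/p^\nu}=p^{-\nu}\sum_{i<\nu}a_ip^i$, so summing the finite part of \eqref{eq:frac-finite} over $\nu$ and adding the geometric tail $n/(p^\ell(p-1))$ collapses to
\[
  \sum_{\nu\geq1}\fracpart{\frac{n}{p^\nu}}=\frac{s_p(n)}{p-1}.
\]
Granting this, the conclusion $\sum_{\nu}\fracpart{n/p^\nu}\leq1$ is equivalent to $s_p(n)\leq p-1$, i.e.\ to $s_p(n)<p$. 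It therefore suffices to prove the contrapositive statement $s_p(n)\geq p\implies p\leq(n+1)/\lambda_n$.

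Next I would convert $s_p(n)\geq p$ into a lower bound on $n$. Since the smallest integer with a prescribed digit sum is obtained by packing the largest admissible digits into the lowest places, the least $n$ with $s_p(n)=p$ is $(p-1)+1\cdot p=2p-1$, and the least $n$ with $s_p(n)=p+1$ is $(p-1)+2\cdot p=3p-1$. In particular every $n$ with $s_p(n)\geq p$ satisfies $n\geq2p-1$, giving $p\leq(n+1)/2$; this already settles the odd case $\lambda_n=2$.

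The improvement to $\lambda_n=3$ for even $n$ comes from parity, which I expect to be the only delicate point. From $p^i\equiv1\pmod{p-1}$ we get $s_p(n)\equiv n\pmod{p-1}$, and since $p-1$ is even for odd $p$ this yields $s_p(n)\equiv n\pmod2$. Hence for even $n$ and odd $p$ the sum $s_p(n)$ is even, so $s_p(n)\geq p$ forces $s_p(n)\geq p+1$, and the minimal-digit bound upgrades to $n\geq3p-1$, i.e.\ $p\leq(n+1)/3$. The main obstacle is the boundary prime $p=2$, where $p-1=1$ makes the parity congruence vacuous; I would dispatch it directly, checking that the least odd $n$ with $s_2(n)\geq2$ is $3=2p-1$ and the least even such $n$ is $6\geq3p-1$, so both bounds persist. (The same closed form also renders Theorem~\ref{thm:main} transparent, as $s_p(n)/(p-1)>1\iff s_p(n)\geq p$ matches \eqref{eq:denom-poly} exactly.)
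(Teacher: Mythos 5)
Your proof is correct, but it takes a genuinely different route from the paper's. The paper argues directly from the truncated formula \eqref{eq:frac-finite}: after disposing of $p>n$, it restricts to $p \leq n \leq \lambda_n p - 2$, writes $n = ap + r$ with $a \in \{1,2\}$ and $0 \leq r \leq p-1-a$, and computes the sum in closed form as $(r+a)/(p-1) \leq 1$, with a separate check of $p=2$, $n=4$. You instead begin by collapsing the series to $\fracsym{n}{p} = s_p(n)/(p-1)$ --- an identity the paper only records later as \eqref{eq:fs-sp}, and which it deliberately avoids in this section (``all previous results have been deduced without involving the function $s_p(n)$ directly'') --- and then prove the contrapositive $s_p(n) \geq p \implies p \leq (n+1)/\lambda_n$ by identifying the minimal integers with digit sum $p$ and $p+1$ (namely $2p-1$ and $3p-1$) and using $s_p(n) \equiv n \pmod{p-1}$ to upgrade $s_p(n) \geq p$ to $s_p(n) \geq p+1$ when $n$ is even and $p$ is odd; your separate treatment of $p=2$ is indeed necessary (the congruence is vacuous there) and your numerical check is right. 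Your version has the advantage of making the link to the condition $s_p(n) \geq p$ in Theorem~\ref{thm:denom} and to the sharpness of the bounds at $n=2p-1$ and $n=3p-1$ completely transparent, since those are precisely your extremal integers; the paper's version is a shorter, self-contained computation that stays entirely within fractional parts, consistent with its goal of deferring $s_p(n)$ to the final section.
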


\begin{proof}
As pointed out above, for $p > n$ the claim already holds.
We have to distinguish between two cases as follows.

Case $p > \frac{n+1}{2}$:
We obtain the bounds $2p-2 \geq n \geq p$. Thus $n = p+r$ with $0 \leq r \leq p-2$.
The right-hand side of \eqref{eq:frac-finite} yields with $a=1$ that
\begin{equation} \label{eq:frac-estim}
  \frac{n}{p(p-1)} + \fracpart{\frac{n}{p}}
  = \frac{ap+r}{p(p-1)} + \frac{r}{p} = \frac{r+a}{p-1} \leq 1,
\end{equation}
showing the result for the first case.

Case $p > \frac{n+1}{3}$ and $n$ even:
With the first case there remain the bounds $3p-2 \geq n \geq 2p$.
If $p=2$, then only $n = p^2$ can hold. By definition,
the left-hand side of \eqref{eq:frac-finite} evaluates to $1/(p-1) = 1$.
For odd $p \geq 3$ we infer that $n = 2p + r$ with $0 \leq r \leq p-3$.
The right-hand side of \eqref{eq:frac-finite} then becomes \eqref{eq:frac-estim}
with $a=2$. This completes the second case and shows the result.
\end{proof}

On the other side, the next theorem shows that the values of the sum of
fractions can be arbitrarily large for powers of $n$.

\begin{theorem} \label{thm:power}
Let $n > 1$ and $p$ be a prime. If $n$ is not a power of $p$, then
\[
  \sum_{\nu \geq 1} \fracpart{\frac{n^k}{p^\nu}} \to \infty
  \quad \text{as} \quad k \to \infty.
\]
\end{theorem}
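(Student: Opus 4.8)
The plan is to convert the statement about sums of fractional parts into a statement about base-$p$ digit sums, and then to feed it into a Diophantine input. First I would record the exact identity
\[
  \sum_{\nu \geq 1} \fracpart{\frac{m}{p^\nu}} = \frac{s_p(m)}{p-1}
  \qquad (m \geq 1),
\]
valid for every positive integer $m$. This follows by writing $\fracpart{m/p^\nu} = m/p^\nu - \intpart{m/p^\nu}$ termwise: the geometric series $\sum_{\nu \geq 1} m/p^\nu$ equals $m/(p-1)$, while Legendre's formula gives $\sum_{\nu \geq 1} \intpart{m/p^\nu} = (m - s_p(m))/(p-1)$ (the exponent of $p$ in $m!$), and the difference is $s_p(m)/(p-1)$. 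Only finitely many floor terms are nonzero, so the rearrangement is legitimate. (Incidentally, this identity also renders the criterion in \eqref{eq:frac-denom} transparent, since $s_p(m)/(p-1) > 1 \iff s_p(m) \geq p$.) Applying it with $m = n^k$ reduces the theorem to the single assertion that $s_p(n^k) \to \infty$ as $k \to \infty$.

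Next I would reformulate the hypothesis. Since $p$ is prime, $n$ is a power of $p$ if and only if $n$ and $p$ are multiplicatively dependent: a relation $n^a = p^b$ with $(a,b) \neq (0,0)$ forces $n$ to be a power of $p$. Hence ``$n$ is not a power of $p$'' is exactly the statement that the bases $p$ and $n$ are multiplicatively independent. (If one prefers, one may first strip off any factor of $p$ by writing $n = p^j n'$ with $p \nmid n'$ and $n' > 1$; since $s_p(n^k) = s_p\bigl((n')^k\bigr)$, this reduces to the case $p \nmid n$.)

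Finally I would appeal to the known fact that, for two multiplicatively independent bases, only finitely many integers can have bounded digit sums in both bases simultaneously (a theorem of Senge and Straus, with an effective form due to Stewart). Applied to the integers $N = n^k$, which trivially satisfy $s_n(N) = 1$, this shows that $s_p(n^k)$ cannot remain bounded along an infinite set of $k$, i.e.\ $s_p(n^k) \to \infty$, which is what we need. I expect this last step to be the real obstacle: the qualitative growth $s_p(n^k) \to \infty$ is genuinely deep and admits no elementary proof. Indeed, a bound $s_p(n^k) \leq C$ forces a block of consecutive zero digits whose length grows linearly in $k$, so that $\fracpart{n^k/p^r}$ becomes smaller than $p^{-G}$ with $G \to \infty$; this makes the linear form $k\log n - r\log p - \log q$ in three logarithms extraordinarily small, precisely the situation controlled and excluded by Baker's theory of linear forms in logarithms. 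Carrying out the reduction and then citing the appropriate Diophantine input is therefore the natural route.
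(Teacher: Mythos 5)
Your proposal is correct and follows essentially the same route as the paper: both reduce the sum of fractional parts to $s_p(n^k)/(p-1)$ via Legendre's formula, observe that $s_n(n^k)=1$ while $p$ and $n$ are multiplicatively independent (after stripping off any factor of $p$), and then invoke the Senge--Straus/Stewart results to force $s_p(n^k)\to\infty$. The only difference is cosmetic: you spell out the multiplicative-independence reformulation and the underlying Baker-theory heuristic more explicitly than the paper does.
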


In view of Theorem~\ref{thm:main}, a weaker version of Theorem~\ref{thm:power},
that the above sum is greater than $1$ for all sufficiently large values of $k$,
already implies the following corollary.

\begin{corollary} \label{cor:power}
Let $\PP$ be a finite set of primes and
\[
  \Pi := \prod_{p \in \PP} p.
\]
If $n > 1$ is not a power of any $p \in \PP$,
then there exists a constant $M$ depending on $n$ and $\PP$ such that
\[
  \Pi \mid \denom \bigl( B_{n^k}(x) - B_{n^k} \bigr) \quad (k \geq M).
\]
\end{corollary}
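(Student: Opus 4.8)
The plan is to handle each prime of $\PP$ individually and then glue the resulting thresholds together, using that $\PP$ is finite. Fix $p \in \PP$. By hypothesis $n$ is not a power of $p$, so Theorem~\ref{thm:power} applies to the integers $n^k$ and gives
\[
  \sum_{\nu \geq 1} \fracpart{\frac{n^k}{p^\nu}} \to \infty \quad \text{as } k \to \infty.
\]
I only need the weaker consequence noted in the text: there is an index $M_p$, depending on $n$ and $p$, such that this sum exceeds $1$ for every $k \geq M_p$.

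Next I would substitute $n^k$ for $n$ in Theorem~\ref{thm:main}. Its one-to-one correspondence converts the inequality $\sum_{\nu \geq 1} \fracpart{n^k/p^\nu} > 1$ directly into the divisibility $p \mid \denom(B_{n^k}(x) - B_{n^k})$, valid for all $k \geq M_p$. Since $\PP$ is finite, I set $M := \max_{p \in \PP} M_p$; then for every $k \geq M$ each prime $p \in \PP$ divides $\denom(B_{n^k}(x) - B_{n^k})$ simultaneously. Being pairwise distinct, these primes are coprime, so their product $\Pi$ divides the denominator as well --- in agreement with the squarefreeness recorded in Theorem~\ref{thm:main}. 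This establishes the corollary with the stated constant $M$.

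Beyond invoking the two main theorems, the argument involves no genuine difficulty; it is essentially bookkeeping. The only point that demands attention is the uniformity of $M$ across all of $\PP$, and this is precisely where the finiteness of $\PP$ is indispensable: taking the maximum of infinitely many thresholds $M_p$ could fail to produce a finite bound, so no single $M$ would suffice.
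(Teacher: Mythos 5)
Your argument is correct and coincides with the paper's own proof: for each $p \in \PP$ one extracts a threshold from Theorem~\ref{thm:power}, converts the inequality into divisibility via Theorem~\ref{thm:main}, and takes the maximum over the finite set $\PP$. Nothing is missing.
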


The aim of this paper is to give a somewhat elementary and direct proof of
Theorem~\ref{thm:main} using properties of fractional parts. This results in
new variants of proofs given in the next sections.


\section{Preliminaries}

Let $\ZZ_p$ be the ring of $p$-adic integers, and $\QQ_p$ be the field of $p$-adic
numbers. Define $\ord_p s$ as the $p$-adic valuation of $s \in \QQ_p$. Let
$\intpart{\,\cdot\,}$ denote the integer part. For $n \geq 0$ define the symbol
\begin{align}
  \fracsym{n}{p} :=& \sum_{\nu \geq 1} \fracpart{\frac{n}{p^\nu}}. \nonumber \\
\intertext{By Legendre's formula (see \cite[Chap.~5.3, p.~241]{Robert:2000}) we have}
  \ord_p n! =& \sum_{\nu \geq 1} \intpart{\frac{n}{p^\nu}}. \label{eq:fac-val}
\end{align}
Writing $n/(p-1)$ as a geometric series and using 
$\fracpart{x} = x - \intpart{x}$ yield
\begin{equation} \label{eq:fs-eq}
  \fracsym{n}{p} = \frac{n}{p-1} - \ord_p n!.
\end{equation}
Consequently, we have
\begin{equation} \label{eq:fs-rel}
  \fracsym{n}{p} \in \NN \quad \iff \quad p - 1 \mid n.
\end{equation}

\begin{lemma} \label{lem:fs-decomp}
Let $p$ be a prime. If $a \geq 0$ and $0 \leq r < p$, then
\[
  \fracsym{ap+r}{p} = \fracsym{a}{p} + \fracsym{r}{p}.
\]
\end{lemma}

\begin{proof}
Since $0 \leq r < p$, we observe by \eqref{eq:fac-val} that
\[
  \ord_p \, (ap+r)! = \sum_{\nu \geq 1} \intpart{\frac{ap+r}{p^\nu}}
  = a + \ord_p a!.
\]
Hence, the result follows easily by \eqref{eq:fs-eq}.
\end{proof}

\begin{lemma} \label{lem:fs-decomp2}
If $n \geq 1$ and $p$ is a prime, then
\[
  \fracsym{n}{p} = \sum_{j=0}^{\ell} \fracsym{n_j}{p},
\]
where $n = n_0 + n_1 \, p + \dotsb + n_\ell \, p^\ell$ is
the $p$-adic expansion of $n$.
\end{lemma}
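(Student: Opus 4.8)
The plan is to prove the identity by induction on the length of the $p$-adic expansion, using Lemma~\ref{lem:fs-decomp} as the sole engine; equivalently, one may run strong induction on $n$. The base case handles $1 \leq n < p$, where the expansion consists of the single digit $n_0 = n$ and the claimed identity is a tautology.

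For the inductive step, suppose $n \geq p$ and split off the lowest digit by writing $n = ap + r$ with $r = n_0$ and
\[
  a = n_1 + n_2\,p + \dotsb + n_\ell\,p^{\ell-1}.
\]
Since the $n_j$ are precisely the base-$p$ digits of $n$, each lies in the range $[0,p)$, so the displayed sum is genuinely the $p$-adic expansion of $a$; moreover $1 \leq a < n$. Lemma~\ref{lem:fs-decomp} then applies verbatim (with this $a$ and $r = n_0$, noting $0 \leq n_0 < p$) to give
\[
  \fracsym{n}{p} = \fracsym{a}{p} + \fracsym{n_0}{p}.
\]

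It remains to invoke the inductive hypothesis on $a$, whose $p$-adic digits are $n_1, \dots, n_\ell$, yielding $\fracsym{a}{p} = \sum_{j=1}^{\ell} \fracsym{n_j}{p}$; substituting and re-absorbing the $j=0$ term produces the full sum $\sum_{j=0}^{\ell} \fracsym{n_j}{p}$. I do not expect any genuine obstacle here: the entire arithmetic content has already been isolated in the single-digit splitting of Lemma~\ref{lem:fs-decomp}, and the present statement merely iterates it. The only point that warrants a moment's attention is the index shift between the digits of $a$ and those of $n$, which is pure bookkeeping.
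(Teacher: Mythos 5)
Your proof is correct and is exactly the paper's argument made explicit: the paper disposes of this lemma in one line ("apply Lemma~\ref{lem:fs-decomp} iteratively"), and your induction on $n$, splitting off the lowest digit, is precisely that iteration spelled out. No gaps.
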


\begin{proof}
This follows by applying Lemma~\ref{lem:fs-decomp} iteratively.
\end{proof}

\begin{lemma} \label{lem:fs-val-binom}
If $n > 1$ and $p \leq n$ is a prime, then
\[
  \fracsym{n}{p} \leq 1 \quad \iff \quad \ord_p \binom{n}{k} \geq 1
\]
for all $0 < k < n$, where $p-1 \mid k$.
\end{lemma}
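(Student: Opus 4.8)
The starting point is to express the $p$-adic valuation of a binomial coefficient through the symbol $\fracsym{\,\cdot\,}{p}$. Writing $\binom{n}{k}=n!/(k!\,(n-k)!)$ and applying \eqref{eq:fs-eq} to each factorial, the rational terms $\frac{n}{p-1},\frac{k}{p-1},\frac{n-k}{p-1}$ cancel because $n=k+(n-k)$, leaving the clean identity
\[
  \ord_p \binom{n}{k} = \fracsym{k}{p} + \fracsym{n-k}{p} - \fracsym{n}{p}.
\]
I would also record two elementary facts used throughout. First, $\fracsym{m}{p}>0$ whenever $m\geq 1$, since for $p^\nu>m$ the term $\fracpart{m/p^\nu}=m/p^\nu$ is positive. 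Second, if $p-1\mid k$ and $k\geq 1$, then \eqref{eq:fs-rel} gives $\fracsym{k}{p}\in\NN$, and combined with the first fact this forces $\fracsym{k}{p}\geq 1$.

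With this identity, the forward implication is immediate. Assume $\fracsym{n}{p}\leq 1$ and fix any $0<k<n$ with $p-1\mid k$. Were $\ord_p\binom{n}{k}=0$, the identity would give $\fracsym{k}{p}+\fracsym{n-k}{p}=\fracsym{n}{p}\leq 1$; but $\fracsym{k}{p}\geq 1$ together with $\fracsym{n-k}{p}>0$ (as $n-k\geq 1$) makes the left-hand side strictly exceed $1$, a contradiction. Hence $\ord_p\binom{n}{k}\geq 1$ for every admissible $k$.

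For the converse I argue by contraposition: assuming $\fracsym{n}{p}>1$, I construct a single $k$ violating the right-hand side. Expanding $n=\sum_j n_j\,p^j$ in base $p$ and combining Lemma~\ref{lem:fs-decomp2} with $\fracsym{n_j}{p}=n_j/(p-1)$ (valid since $\ord_p n_j!=0$), the hypothesis reads $\sum_j n_j>p-1$, i.e.\ the digit sum satisfies $\sum_j n_j\geq p$. I then choose digits $0\leq k_j\leq n_j$ with $\sum_j k_j=p-1$, which is possible because the attainable digit sums fill the whole range $0,1,\dotsc,\sum_j n_j$ and $p-1$ lies strictly inside it. Setting $k=\sum_j k_j\,p^j$, Lemma~\ref{lem:fs-decomp2} gives $\fracsym{k}{p}=1\in\NN$, so $p-1\mid k$ by \eqref{eq:fs-rel}, while $0<\sum_j k_j<\sum_j n_j$ yields $0<k<n$. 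Since $k_j\leq n_j$ for all $j$, the number $n-k$ has $p$-adic digits $n_j-k_j$, so $\fracsym{n-k}{p}=\fracsym{n}{p}-\fracsym{k}{p}$, and the displayed identity gives $\ord_p\binom{n}{k}=0$. This $k$ witnesses the failure of the right-hand side, completing the contrapositive.

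I expect the converse to carry the real content: the equivalence hinges on producing a proper, nonzero ``digit-submask'' of $n$ whose digit sum is exactly $p-1$, and on checking that such a $k$ simultaneously satisfies $p-1\mid k$ and forces $\ord_p\binom{n}{k}=0$. The forward direction, by contrast, is a one-line consequence of the valuation identity together with the integrality $\fracsym{k}{p}\in\NN$.
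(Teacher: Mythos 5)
Your proposal is correct and follows essentially the same route as the paper's own proof: the valuation identity $\ord_p\binom{n}{k}=\fracsym{k}{p}+\fracsym{n-k}{p}-\fracsym{n}{p}$ derived from \eqref{eq:fs-eq}, the integrality $\fracsym{k}{p}\geq 1$ from \eqref{eq:fs-rel} for the forward direction, and the construction of a digit-submask $k$ with digit sum exactly $p-1$ (via Lemma~\ref{lem:fs-decomp2}) for the converse. The only cosmetic difference is that you phrase the forward direction as a contradiction where the paper states the positivity directly.
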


\begin{proof}
Set $\KK := \set{k \in \NN : 0 < k < n, \, p-1 \mid k}$, where
$\KK \neq \emptyset$ by assumption. We consider the following two cases.

Case $\fracsym{n}{p} \leq 1$:
Applying \eqref{eq:fs-eq} and \eqref{eq:fs-rel} we infer that
\begin{equation} \label{eq:binom-val}
  \ord_p \binom{n}{k} = \underbrace{\fracsym{n-k}{p}}_{> \, 0}
    + \underbrace{\fracsym{k}{p}}_{\geq \, 1}
    - \underbrace{\fracsym{n}{p}}_{\leq \, 1} > 0
    \quad (k \in \KK).
\end{equation}

Case $\fracsym{n}{p} > 1$:
We will show that an integer $k \in \KK$ exists with $\ord_p \binom{n}{k} = 0$.
Generally, there exist integers $k$ with $1 \leq k \leq n$
such that the $p$-adic expansions yield
\begin{alignat}{5}
  n &= n_0 &&+ n_1 \, p &&+ \dotsb + n_\ell \, p^\ell, \nonumber \\
  k &= k_0 &&+ k_1 \, p &&+ \dotsb + k_\ell \, p^\ell \label{eq:exp-k}
\end{alignat}
with $0 \leq k_j \leq n_j$ for $j = 0,\dotsc,\ell$. Thus, we have the
$p$-adic expansion
\begin{equation}
  n-k = (n_0-k_0) + (n_1-k_1) \, p + \dotsb + (n_\ell - k_\ell) \, p^\ell.
\end{equation}
With that we achieve by Lemma~\ref{lem:fs-decomp2} and using \eqref{eq:fs-eq} that
\begin{equation} \label{eq:fs-n-k}
  \fracsym{n}{p} = \fracsym{n-k}{p} + \fracsym{k}{p},
\end{equation}
implying by \eqref{eq:binom-val} that
\begin{equation} \label{eq:binom-val-0}
  \ord_p \binom{n}{k} = 0.
\end{equation}

Since $\fracsym{n}{p} > 1$, we obtain by Lemma~\ref{lem:fs-decomp2} and
\eqref{eq:fs-eq} that
\begin{alignat*}{5}
  n_0 &+ n_1 &&+ \dotsb + n_\ell &&> p-1.
\intertext{Hence, we can choose $k_j$ with $0 \leq k_j \leq n_j$ for
$j = 0,\dotsc,\ell$ such that}
  k_0 &+ k_1 &&+ \dotsb + k_\ell &&= p-1,
\end{alignat*}
satisfying \eqref{eq:exp-k} -- \eqref{eq:binom-val-0} but with $0 < k < n$.
By Lemma~\ref{lem:fs-decomp2} and \eqref{eq:fs-eq} it then follows that
$\fracsym{k}{p} = 1$.
Consequently, $p-1 \mid k$ by \eqref{eq:fs-rel} and therefore $k \in \KK$.
This completes the second case.

Finally, both cases imply the claimed equivalence.
\end{proof}

\begin{remark}
Actually, Eqs.~\eqref{eq:binom-val}, \eqref{eq:fs-n-k}, and \eqref{eq:binom-val-0}
reflect Kummer's theorem that $\ord_p \binom{n}{k}$ equals the number of carries
when adding $k$ to $n-k$ in base~$p$. Carlitz \cite{Carlitz:1961} gave a more
general result of Lemma~\ref{lem:fs-val-binom} in context of the function~$s_p(n)$,
whose proof depends on Lucas's theorem, namely
\[
  \binom{n}{k} \equiv \binom{n_0}{k_0} \binom{n_1}{k_1} \dotsm
    \binom{n_\ell}{k_\ell} \pmod{p}.
\]
\end{remark}


\section{Proof of \texorpdfstring{Theorem~\ref{thm:main}}{Theorem~2}}

While the Bernoulli numbers $B_n = 0$ for odd $n \geq 3$,
the theorem of von Staudt--Clausen asserts for even $n \geq 2$ that
\begin{equation} \label{eq:denom-bn}
  B_n + \sum_{p-1 \, \mid \, n} \frac{1}{p} \in \ZZ
  \quad \text{implying that} \quad
  \denom( B_n ) = \prod_{p-1 \, \mid \, n} p.
\end{equation}

The $p$-adic valuation of a nonzero polynomial
\[
  f(x) = \sum_{k=0}^{r} a_k \, x^k \in \QQ[x] \backslash \! \set{0}
\]
of degree $r$ is given by
\[
  \ord_p f(x) = \min_{ 0 \leq k \leq r} \ord_p a_k.
\]

Define the polynomials
\begin{align*}
  \BT_n(x) &:= B_n(x) - B_n, \\
  \BT_{n,p}(x) &:= \!
    \sum_{\substack{k=2\\ 2 \, \mid \, k \\ p-1 \, \mid \, k}}^{n-1}
    \!\! \binom{n}{k} B_k \, x^{n-k}.
\end{align*}

\begin{lemma} \label{lem:bt-p-val}
If $n \geq 3$ and $p$ is a prime, then
\[
  \ord_p \BT_{n,p}(x) =
    \left\{
      \begin{array}{rl}
        -1, & \text{if } \fracsym{n}{p} > 1, \\
        \geq 0, & \text{else.}
      \end{array}
    \right.
\]
In particular, if $p=2$ and $n \geq 3$ is odd, then $\ord_p \BT_{n,p}(x) = -1$.
\end{lemma}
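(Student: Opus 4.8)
The plan is to reduce the assertion to a statement about the $p$-adic valuations of the binomial coefficients $\binom{n}{k}$ and then invoke Lemma~\ref{lem:fs-val-binom}. The key input is the theorem of von Staudt--Clausen \eqref{eq:denom-bn}: for every even $k$ with $p-1 \mid k$ one has $\ord_p B_k = -1$ exactly, since the denominator is squarefree. Because the $p$-adic valuation of a polynomial is the minimum of the valuations of its coefficients, and every coefficient of $\BT_{n,p}(x)$ has the shape $\binom{n}{k} B_k$ with $k$ even and $p-1 \mid k$, this yields the reduction
\[
  \ord_p \BT_{n,p}(x) = -1 + \min_{k} \ord_p \binom{n}{k},
\]
where the minimum ranges over the index set $\mathcal{I} := \set{k : 2 \leq k \leq n-1, \, 2 \mid k, \, p-1 \mid k}$ of $\BT_{n,p}$. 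Everything then comes down to deciding whether some $k \in \mathcal{I}$ satisfies $\ord_p \binom{n}{k} = 0$.

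One direction is uniform in $p$. I observe that $\mathcal{I} \subseteq \KK$, where $\KK = \set{k \in \NN : 0 < k < n, \, p-1 \mid k}$ is the set of Lemma~\ref{lem:fs-val-binom}. Hence, if $\fracsym{n}{p} \leq 1$, then by that lemma every $k \in \KK$, and in particular every $k \in \mathcal{I}$, satisfies $\ord_p \binom{n}{k} \geq 1$, so the displayed reduction gives $\ord_p \BT_{n,p}(x) \geq 0$. This settles the ``else'' branch. (When $\mathcal{I} = \emptyset$ the polynomial vanishes and $\ord_p \BT_{n,p}(x) = +\infty$; this can occur only for odd $p > n$, where $\fracsym{n}{p} = n/(p-1) \leq 1$, so it is consistent.)

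For the converse I must produce, under the hypothesis $\fracsym{n}{p} > 1$, a witness $k \in \mathcal{I}$ with $\ord_p \binom{n}{k} = 0$; the extra constraint $2 \mid k$ in $\mathcal{I}$ (absent from $\KK$) is the only real obstacle, and I expect it to be the crux of the proof. When $p$ is odd the constraint is automatic, since $p-1$ is even, so every multiple of $p-1$ is even, while $0 < k$ forces $k \geq p-1 \geq 2$; thus $\mathcal{I} = \KK$ and the witness supplied by Lemma~\ref{lem:fs-val-binom} already lies in $\mathcal{I}$. The genuinely new case is $p = 2$, where $p-1 = 1$ imposes no divisibility and evenness is a real restriction. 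Here I would rewrite $\fracsym{n}{2} > 1$ via the binary expansion $n = n_0 + n_1 \cdot 2 + \dotsb + n_\ell \cdot 2^\ell$: by Lemma~\ref{lem:fs-decomp2} and \eqref{eq:fs-eq} this is equivalent to the digit sum satisfying $n_0 + \dotsb + n_\ell \geq 2$. Since at most the digit $n_0$ lies in position $0$, there is some $j \geq 1$ with $n_j = 1$; taking $k := 2^j$ gives an even integer with $2 \leq k < n$ whose binary digits form a submask of those of $n$, so that $\ord_2 \binom{n}{k} = 0$ by Kummer's theorem (equivalently Lucas's, cf.\ the remark above). This $k$ lies in $\mathcal{I}$ and forces $\ord_2 \BT_{n,2}(x) = -1$.

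Finally, the ``in particular'' assertion is immediate: if $p = 2$ and $n \geq 3$ is odd, then $n_0 = 1$ together with $n \geq 3$ forces at least one further nonzero binary digit, so the digit sum is $\geq 2$, i.e.\ $\fracsym{n}{2} > 1$, and the construction of the previous paragraph applies to give $\ord_2 \BT_{n,2}(x) = -1$.
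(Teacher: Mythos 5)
Your proof is correct, and its overall architecture matches the paper's: the reduction $\ord_p \BT_{n,p}(x) = -1 + \min_{k} \ord_p\binom{n}{k}$ via von Staudt--Clausen (the paper's Eq.~\eqref{eq:coeff-val}), the disposal of the empty-index case, and the appeal to Lemma~\ref{lem:fs-val-binom} both for the ``else'' branch and for the witness when $p$ is odd. Where you genuinely diverge is the case $p=2$ with $\fracsym{n}{2}>1$. The paper splits on the parity of $n$: for odd $n$ it exhibits the explicit witness $k=n-1\in\KK_2$ with $\binom{n}{n-1}=n$ odd, and for even $n$ it notes $\binom{n}{\ell}\equiv 0\pmod 2$ for all odd $\ell$, so the evenness constraint in $\KK_2$ is vacuous and Lemma~\ref{lem:fs-val-binom} applies verbatim. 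You instead give a single parity-free construction: pick a nonzero binary digit $n_j$ with $j\geq 1$ (which exists since $s_2(n)\geq 2$) and take $k=2^j$, concluding $\ord_2\binom{n}{k}=0$ from Kummer's theorem. This is arguably cleaner; the only caveat is that Kummer's theorem is not proved in the paper (it appears only in a remark), but the special case you need --- that a base-$p$ digit-submask $k$ of $n$ satisfies $\ord_p\binom{n}{k}=0$ --- is exactly what Eqs.~\eqref{eq:exp-k}--\eqref{eq:binom-val-0} in the proof of Lemma~\ref{lem:fs-val-binom} establish, so you could cite those instead and keep the argument self-contained. Your handling of the ``in particular'' clause (odd $n\geq 3$ has $s_2(n)\geq 2$) is also a valid shortcut compared with the paper's direct use of $k=n-1$.
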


\begin{proof}
Set $\KK_2 := \set{k \in 2\NN : 0 < k < n, \, p-1 \mid k}$.
Note that $\KK_2 = \emptyset \Leftrightarrow p > n$. In this case,
we have $\BT_{n,p}(x) = 0$ and thus $\ord_p \BT_{n,p}(x) = \infty$,
as well as $\fracsym{n}{p} \leq 1$ by \eqref{eq:fs-eq}.

So we assume that $\KK_2 \neq \emptyset$ and $p \leq n$.
For all coefficients of $\BT_{n,p}(x)$, we obtain by \eqref{eq:denom-bn} that
\begin{equation} \label{eq:coeff-val}
  \ord_p \binom{n}{k} B_k = -1 + \ord_p \binom{n}{k} \geq -1
    \quad (k \in \KK_2).
\end{equation}
For odd primes $p$ the claim follows at once by Lemma~\ref{lem:fs-val-binom}.
For $p = 2$ there remain two cases as follows.

Case $p=2$ and $n \geq 3$ odd:
As $n-1 \in \KK_2$ and $n = \binom{n}{n-1}$ is odd, it follows by 
\eqref{eq:coeff-val} that $\ord_p \BT_{n,p}(x) = -1$.
On the other side, we can write $n = n'\,p + 1$ with some $n' \geq 1$.
By Lemma~\ref{lem:fs-decomp} and \eqref{eq:fs-eq} we obtain
$\fracsym{n}{p} = \fracsym{n'}{p} + 1 > 1$; showing the claim for this case.

Case $p=2$ and $n \geq 4$ even:
Since $n$ is even, we have for odd $\ell$ with $0 < \ell < n$ that
\[
  \binom{n}{\ell} \equiv \frac{n}{\ell} \binom{n-1}{\ell-1} \equiv 0 \pmod{2}.
\]
Therefore, with $\KK_2 = \set{2,4,\dotsc,n-2}$,
\[
  \ord_p \binom{n}{k} \geq 1 \quad (k \in \KK_2)
  \quad \!\iff\! \quad
  \ord_p \binom{n}{k} \geq 1 \quad (0 < k < n).
\]
With that we can apply Lemma~\ref{lem:fs-val-binom} to show the claim for that 
case.
\end{proof}

\begin{proof}[Proof of Theorem~\ref{thm:main}]
Note that $B_0 = 1$ and $B_1 = -\frac{1}{2}$. 

Cases $n = 1, 2$:
As $\BT_1(x) = x$ and $\BT_2(x) = x^2-x$ by \eqref{eq:bn-poly}, we have for all 
primes $p$ that $\ord_p \BT_n(x) = 0$, while $\fracsym{n}{p} \leq 1$ by 
\eqref{eq:fs-eq}. This shows \eqref{eq:frac-denom} for these cases.

Case $n \geq 3$: 
Since $B_k = 0$ for odd $k \geq 3$, we deduce from \eqref{eq:bn-poly} that
\begin{equation} \label{eq:bt-sum}
  \BT_n(x) = x^n - \frac{n}{2} x^{n-1} +
    \sum_{\substack{k=2\\ 2 \, \mid \, k}}^{n-1} \binom{n}{k} B_k \, x^{n-k}.
\end{equation}
For even $k \geq 2$ we have by \eqref{eq:denom-bn} that
$B_k \in \ZZ_p$ if $p-1 \nmid k$. Thus we infer from~\eqref{eq:bt-sum} that
\begin{equation} \label{eq:bt-val}
  \ord_p \BT_n(x)
  = \min \mleft( 0, \, \ord_p \frac{n}{2}, \, \ord_p \BT_{n,p}(x) \mright).
\end{equation}
Since $\ord_p \BT_{n,p}(x) \geq -1$ by Lemma~\ref{lem:bt-p-val} and
$\ord_p \frac{n}{2} \geq -1$, we get
\begin{equation} \label{eq:bt-val-set}
  \ord_p \BT_n(x) \in \set{-1, 0}.
\end{equation}
If $\ord_p \frac{n}{2} = -1$, then $p=2$ and $n \geq 3$ is odd. In this special
case, we concurrently have $\ord_p \BT_{n,p}(x) = -1$ by Lemma~\ref{lem:bt-p-val}.
Hence, we then conclude from Lemma~\ref{lem:bt-p-val} and \eqref{eq:bt-val} that
\[
  \fracsym{n}{p} > 1 \quad \!\iff\! \quad
  \ord_p \BT_n(x) = -1 \quad \!\iff\! \quad
  p \mid \denom \bigl( \BT_n(x) \bigr).
\]
This shows \eqref{eq:frac-denom} for the case $n \geq 3$.

Finally, since \eqref{eq:bt-val-set} holds in all cases, the denominator of
$\BT_n(x)$ is squarefree. This completes the proof.
\end{proof}


\section{Proof of \texorpdfstring{Theorem~\ref{thm:power}}{Theorem~3} and
\texorpdfstring{Corollary~\ref{cor:power}}{Corollary~1}}

All previous results have been deduced without involving the function $s_p(n)$
directly. However, using an alternative form of Legendre's formula of
$\ord_p n!$, we have besides that
\begin{equation} \label{eq:fs-sp}
  \ord_p n! = \frac{n - s_p(n)}{p-1}
  \quad \text{and} \quad
  \fracsym{n}{p} = \frac{s_p(n)}{p-1}.
\end{equation}

For two multiplicatively independent integers $a,b \geq 2$, a positive integer
$n$ cannot have few nonzero digits in both bases $a$ and $b$ simultaneously.
Steward \cite[Thm.~1, p.~64]{Stewart:1980} gave a lower bound such that
\begin{equation} \label{eq:sp-estim-1}
  s_a(n) + s_b(n) > \frac{\log \log n}{\log \log \log n + c} - 1 \quad (n > 25)
\end{equation}
with an effectively computable constant $c$ depending on $a$ and $b$.
See also Bugeaud \cite[Thm.~6.9, p.~134]{Bugeaud:2012} for a related result.
However, the weaker result of Senge and Straus \cite[Thm.~3]{Senge&Straus:1973},
that for any constant $C$ the number of integers $n$ satisfying
\begin{equation} \label{eq:sp-estim-2}
  s_a(n) + s_b(n) < C
\end{equation}
is finite, would already suffice for our purpose.

\begin{proof}[Proof of Theorem~\ref{thm:power}]
We can write $n = \tilde{n} \, p^r$ with some $r \geq 0$ and $\tilde{n} > 1$,
where $p \nmid \tilde{n}$ by assumption. Taking $a = p$ and $b = n$ in view of
\eqref{eq:sp-estim-1} and \eqref{eq:sp-estim-2}, we have
$s_p(n^k) = s_p(\tilde{n}^k) > 1$ and $s_n(n^k)=1$ for all $k \geq 1$.
We then conclude from \eqref{eq:sp-estim-1}, or similarly from
\eqref{eq:sp-estim-2}, that
\[
  s_p(n^k) \to \infty \quad \text{as} \quad k \to \infty.
\]
By \eqref{eq:fs-sp} the result follows.
\end{proof}

\begin{proof}[Proof of Corollary~\ref{cor:power}]
From Theorem~\ref{thm:power} we infer that for each prime $p \in \PP$ there
exists a constant $m_p$ such that
\[
  \fracsym{n^k}{p} > 1 \quad (k \geq m_p).
\]
Set $M := \max\limits_{p \in \PP} \, m_p$. Then by Theorem~\ref{thm:main} the
result follows.
\end{proof}


\end{document}